\theoremstyle{theorem}
\newtheorem{thm}{Theorem}[section]
\newtheorem{lem}[thm]{Lemma}
\newtheorem{cor}[thm]{Corollary}
\newtheorem{question}[thm]{Question}
\theoremstyle{definition}
\newtheorem{example}[thm]{Example}%[section]
\newtheorem{defn}[thm]{Definition}%[section]
\author{Sarah Mayes-Tang}
\address{Quest University Canada, 3200 University Blvd, Squamish, British Columbia, V8B 0N8}
\begin{document}

\title{Asymptotic stabilization of Betti diagrams of generic initial systems}

\keywords{Betti numbers, generic initial ideals, graded systems of ideals,  asymptotic behaviour}
\subjclass[2010]{13D02, 13P10}

\maketitle

\begin{abstract}
Several authors investigating the asymptotic behaviour of the Betti diagrams of the graded system $\{ I^k \}$ have shown that the shape of the nonzero entries in the diagrams stabilizes when $I$ is a homogeneous ideal with generators of the same degree.  In this paper, we study the Betti diagrams of graded systems of ideals built by taking the initial ideals or generic initial ideals of powers, and discuss the stabilization of additional collections of Betti diagrams.  Our main result shows that when $I$ has generators of the same degree, the entries in the Betti diagrams of the reverse lexicographic generic initial system $\{ \text{gin}_{\tau}(I^k) \}$ are given asymptotically by polynomials and that  the shape of the diagrams stabilizes.  

\end{abstract}

\section{Betti Tables of Graded Systems of Ideals:  Examples}\label{sec:examples}

Cutkosky, Herzog, and Trung proved that when $I$ is an ideal in $S=K[x_1, \dots, x_n]$, the regularity of $I^k$ is a  linear function for $k$ large enough (\cite{CHT99}). Following this lead, later  work  examined the asymptotic behaviour of finer invariants of the set of powers of an ideal, $\{ I^k \}$, such as the the Betti numbers and graded Betti numbers $\beta_{i,j}(I) = \text{dim}_K \text{Tor}_i(K, I)_j$ (\cite{LV04}, \cite{Singla07}, \cite{BCH11}, \cite{Wh14}).  The goal of this paper is to examine the behaviour of the graded Betti numbers of other  families of ideals.  In particular, we will consider \textbf{graded systems of ideals} $\{ J_k \}$ satisfying  $J_i \cdot J_j \subseteq J_{i+j}$ for all $i$ and $j$.  

One type of end behaviour of graded Betti numbers of ideals in a graded system occurs when  the shape of the nonzero entries in the Betti diagrams \textit{stabilizes} in the following sense (see \cite{Wh14}).

\begin{defn}\label{defn:stabilize}
Consider a graded system of ideals $\{ J_k \}$.  We say that the \textbf{shape of $\beta(J_k)$ stabilizes} if there exist integers $k_0$ and $r$ such that for all $i$, $j$, and  $k\geq k_0$,
$$\beta_{i,kr+j}(J_k) \neq 0 \text{ if and only if } \beta_{i,k_0r+j}(J_{k_0}) \neq 0.$$ 
The smallest  $k_0$ satisfying this definition is called the \textbf{stabilization index} of $\{ J_k \}$.
\end{defn}

The first known families with this property are obtained by taking powers of ideals with homogeneous generators of the same degrees; such ideals are said to be \textbf{equigenerated}.

\begin{thm} [\cite{LV04}, \cite{Singla07}, \cite{Wh14}]\label{thm:stabilization}
Let $I$ be an ideal in a polynomial ring $S= K[x_1, \dots, x_n]$ that is equigenerated in degree $r$.  Then the shape of $\beta(I^k) $ stabilizes.  
\end{thm}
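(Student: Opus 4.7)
The plan is to reduce the stabilization statement to two independent facts: a uniform bound on the rectangle in which the nonzero entries of the Betti diagram of $I^k$ can lie, and the eventual polynomial behaviour in $k$ of each individual graded Betti number $\beta_{i,j}(I^k)$ along the relevant shifted diagonals. Combined, these imply that for each cell of the shifted diagram, either $\beta_{i,rk+j}(I^k) = 0$ for all large $k$, or $\beta_{i,rk+j}(I^k) \neq 0$ for all large $k$, which is precisely the shape stabilization of Definition \ref{defn:stabilize}.

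For the uniform bound, I would first observe that, because $I$ is equigenerated in degree $r$, the power $I^k$ is minimally generated in the single degree $kr$, so minimality of the free resolution forces $\beta_{i,j}(I^k) = 0$ whenever $j < kr + i$. For the complementary upper bound, the Cutkosky--Herzog--Trung theorem \cite{CHT99}, strengthened by Kodiyalam's result that $\text{reg}(I^k) = rk + c$ for some constant $c$ and all $k \gg 0$, gives $\beta_{i,j}(I^k) = 0$ once $j - i > rk + c$. Reparametrizing via $\ell := j - i - rk$, the possibly nonzero positions $(i, \ell)$ all lie in the finite set $\{0, \ldots, n\} \times \{0, \ldots, c\}$, independent of large $k$.

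For the polynomial behaviour, I would use the Rees algebra $\mathcal{R}(I) = \bigoplus_{k \geq 0} I^k$, which is a finitely generated bigraded $S$-algebra generated in bidegree $(r,1)$ by equigeneration. The total Tor module $\bigoplus_k \text{Tor}_i^S(K, I^k)$ can be endowed with the structure of a finitely generated bigraded module over $K \otimes_S \mathcal{R}(I)$, a (rescaled) standard bigraded $K$-algebra. A bigraded Hilbert--Serre argument then yields that, for each fixed $i$ and $\ell$, the function $k \mapsto \beta_{i, rk + i + \ell}(I^k)$ agrees with a polynomial in $k$ for $k \gg 0$; in particular, it is eventually either identically zero or strictly positive. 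Taking $k_0$ to be the largest threshold over the finitely many $(i, \ell)$ in the rectangle above (and beyond which the linear formula for $\text{reg}(I^k)$ is valid) gives the required stabilization index.

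The principal obstacle is the polynomiality step in the third paragraph: one must carefully set up the bigraded module structure on the total Tor module and verify its finite generation over an appropriate standard bigraded $K$-algebra before any Hilbert--Serre machinery can be applied. This is the technical heart of the argument and is what the proofs in \cite{LV04}, \cite{Singla07}, and \cite{Wh14} are chiefly devoted to; the reductions in the first two paragraphs are essentially bookkeeping on top of it.
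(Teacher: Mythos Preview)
The paper does not supply its own proof of this theorem---it is quoted from \cite{LV04}, \cite{Singla07}, \cite{Wh14}---but your outline is correct and coincides exactly with the scaffolding the paper erects around the statement: your polynomiality step is the paper's Theorem~\ref{thm:polystabilization}, your window bound is the Cutkosky--Herzog--Trung regularity result recalled in the opening paragraph (and parallels Lemma~\ref{lem:ginwindow} for the non-gin case), and your combination of the two is precisely Lemma~\ref{lem:2conditions}. Your sketch of the Rees-algebra/bigraded-Tor argument behind the polynomiality is also the approach taken in the cited sources, so there is nothing to add.
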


\begin{example}\label{ex:powers}
Let $I = (x^2y+z^3, xyz, yz^2)$.  Macaulay2 (\cite{Macaulay2}) gives the following Betti diagrams of $I^k$ for small $k$.  Note that $\beta_{i,i+j}(I)$ is displayed in the $i$th column and the $j$th row of the Betti diagram.

$$\bordermatrix{ {\beta(I):} & 0 & 1  \cr
					3 & 3 & 1\cr
					4 & - & 1\cr
	}
	\qquad \bordermatrix{ {\beta(I^2):} & 0 & 1  \cr
					6 & 6 & 4\cr
					7 & - & 1\cr
	}
	\qquad  \bordermatrix{ {\beta(I^3):} & 0 & 1 & 2  \cr
					9 & 10 & 9 & 1\cr
					10 & - & 1 & -\cr
	}
	$$
	
$$\bordermatrix{ {\beta(I^4):} & 0 & 1 & 2  \cr
					12 & 15 & 16 & 3\cr
					13 & - & 1 & -
	}
	\qquad \bordermatrix{ {\beta(I^5):} & 0 & 1 & 2 \cr
					15 & 21 & 25 & 6\cr
					16 & - & 1 & -\cr
	}
	\qquad  \bordermatrix{ {\beta(I^6):} & 0 & 1 & 2 \cr
					18 & 28 & 36 & 10\cr
					19 & - & 1 & -
	}.
	$$
For all $k \geq 3$, $\beta(I^k)$ has the following shape
$$\bordermatrix{ {\beta(I^k):} & 0 & 1 & 2  \cr
					3k & a & b & c \cr
					3k+1 & - & d & -
	}$$
	and we see from the examples that the stabilization index of $\{ I^k \}$ is 3.  Looking more closely at the graded Betti numbers appearing in the tables, we see that the entries are given by the following polynomials in $k$:
	$$a = \dfrac{k^2}{2}+ \dfrac{3k}{2}+1,  \quad b = k^2,  \quad c = \dfrac{k^2}{2} - \dfrac{3}{2}k+1,  \quad d = 1.$$
\end{example}

\begin{defn}\label{defn:givenbypolys}
Consider an indexed collection of ideals $\{ J_k \}$. We say that the entries of $ \beta(J_k) $ are \textbf{given by polynomials for $k>>0$} if there exists a number $r$ such that for all $i,j$ there is a number $k_0(i,j)$ and a polynomial $P_{i,j}(k)$ satisfying 
$$\beta_{i,rk+i+j}(J_k) = P_{i,j}(k)$$
for all $k >k_0(i,j)$
\end{defn} 

As suggested by Example \ref{ex:powers}, when $I$ is equigenerated $\beta(I^k) $ has entries given by polynomials.

\begin{thm} [Cor. 2.2 of \cite{Singla07}, Prop. 6.3.6 of \cite{LV04}]\label{thm:polystabilization}
Let $I$ be an ideal in a polynomial ring $S= K[x_1, \dots, x_n]$ that is equigenerated in degree $r$. Then $ \beta(I^k) $ has entries given by polynomials for $k>>0$. Further, the  polynomial functions corresponding to nonzero entries have positive leading coefficients and are of degree less than the analytic spread of $I$, $\ell(I)$.  
\end{thm}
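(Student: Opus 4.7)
The plan is to exploit the bigraded structure of the Rees algebra $\mathcal{R} := \mathcal{R}(I) = \bigoplus_{k \geq 0} I^k$ and reduce the statement to the classical Hilbert polynomial theorem for finitely generated graded modules. Since $I$ is equigenerated in degree $r$, I would give $\mathcal{R}$ a bigrading by assigning bidegree $(s,k)$ to an element of internal $S$-degree $rk+s$ sitting in $I^k$. Under this convention each $x_\ell \in S$ has bidegree $(1,0)$, while any $S$-algebra generator of $\mathcal{R}$ (i.e.\ an element of $I$ of degree $r$, viewed in the first copy) has bidegree $(0,1)$. Thus $\mathcal{R}$ is a finitely generated standard bigraded $S$-algebra.

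Next I would study the bigraded modules $T_i := \mathrm{Tor}_i^S(K,\mathcal{R})$. Computing Tor via the Koszul complex on $x_1,\dots,x_n$, which is a finite complex of finitely generated bigraded free $S$-modules, and using that $\mathcal{R}$ is Noetherian, each $T_i$ is a finitely generated bigraded module over $\mathcal{R}$. The maximal ideal $\mathfrak{m}=(x_1,\dots,x_n)$ annihilates $T_i$, so $T_i$ is in fact finitely generated over the fiber cone $\mathcal{F}(I) := \mathcal{R}/\mathfrak{m}\mathcal{R}$, which is concentrated in bidegrees of the form $(0,k)$ and is a standard graded $K$-algebra of Krull dimension $\ell(I)$. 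By tracking degrees through the Koszul complex one checks that the bigraded component $(T_i)_{(s,k)}$ has $K$-dimension exactly $\beta_{i,\,rk+s}(I^k)$.

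Because every element of $\mathcal{F}(I)$ has first bidegree $0$, fixing $s$ yields a finitely generated graded $\mathcal{F}(I)$-module $M_s := \bigoplus_k (T_i)_{(s,k)}$ (generated by those generators of $T_i$ whose first bidegree equals $s$). Setting $s = i+j$ and applying the classical Hilbert polynomial theorem, the function
\[
k \longmapsto \dim_K (M_{i+j})_k = \beta_{i,\,rk+i+j}(I^k)
\]
agrees with a polynomial $P_{i,j}(k)$ for $k$ sufficiently large, and $\deg P_{i,j} \leq \dim \mathcal{F}(I) - 1 = \ell(I)-1$. Positivity of the leading coefficient whenever $P_{i,j}$ is nonzero is automatic: the polynomial takes nonnegative integer values and, being eventually nonzero, must eventually be strictly positive.

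The main obstacle is the careful bookkeeping of the bigrading: one must verify that the Koszul complex over $S$ becomes a complex of bigraded free $\mathcal{R}$-modules after tensoring, that $\mathcal{R}$ is Noetherian in the bigraded sense so that each $T_i$ inherits finite generation, and that the action of $\mathcal{F}(I)$ truly preserves the $s$-grading so that slicing at fixed $s$ yields a finitely generated $\mathcal{F}(I)$-module. Once this bigraded machinery is in place, the polynomial conclusion, the degree bound in terms of the analytic spread, and the positivity statement follow immediately from standard Hilbert-series arguments.
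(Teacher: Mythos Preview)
The paper does not prove this statement; it is quoted as a known result attributed to \cite{Singla07} and \cite{LV04} and used as a black box. Your proposal is correct and is essentially the argument that appears in those sources: bigrade the Rees algebra so that the generators of $I$ sit in bidegree $(0,1)$, compute $\mathrm{Tor}_i^S(K,\mathcal{R})$ via the Koszul complex to get a finitely generated bigraded $\mathcal{R}$-module annihilated by $\mathfrak m$, pass to the fiber cone $\mathcal{F}(I)$, and slice at fixed first degree to reduce to the ordinary Hilbert polynomial for a finitely generated graded $\mathcal{F}(I)$-module. The degree bound $\deg P_{i,j}<\ell(I)$ and the positivity of the leading coefficient fall out exactly as you say. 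The bookkeeping points you flag (Noetherianity of $\mathcal{R}$, that $\mathcal{F}(I)$ acts with $s$-degree $0$ so each slice $M_s$ is a finitely generated summand) are the only places requiring care, and they go through without difficulty.
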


At first glance, it may seem that if $ \beta(J_k) $ has entries given by polynomials for $k>>0$ then   the shape of the $\beta(J_k) $ must stabilize.  However, notice that as $k$ gets large, the number of nonzero entries in $\beta(J_k)$ may keep increasing, and the collection $\{k_0(i,j) \}$ may be unbounded.  The following example illustrates this possibility

\begin{example}
Consider the complete intersection ideal $I=(x^{a}, y^{b}) \subseteq K[x,y]$.  When $1<a \lneq b$, 
\begin{eqnarray*}
\beta_{0, ak+l(b-a)}(I^k) = 1 && \text{ for all } k \geq l \\
\beta_{1, ak+l(b-a)+a}(I^k) = 1 &&\text{ for all } k \geq l
\end{eqnarray*}
and all other Betti numbers are 0 (see \cite{GV05}).  Thus, the Betti numbers of $\beta(I^k)$ are given by polynomials but the shape of the diagram does not stabilize because the number of nonzero entries in $\beta(I^k)$ is unbounded.
\end{example}

The following lemma connects Definitions  and \ref{defn:stabilize} and \ref{defn:givenbypolys}; its proof is immediate from the definitions.

\begin{lem}\label{lem:2conditions}
Let $\{J_k\}$ be an indexed collection of ideals.  Suppose that the entries of $\beta(J_k)$ are given by polynomials in $k$ and that there exists a $k_0$ such that the number of nonzero entries of $\beta(J_k)$ is bounded by a constant for all $k>k_0$.  Then the shape of $ \beta(J_k) $ stabilizes. 
\end{lem}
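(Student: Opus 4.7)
The plan is to combine the two hypotheses by extracting from the polynomial data a finite set $S$ of ``stably nonzero'' shifted positions, and then exhibiting a uniform threshold $k^*$ beyond which the nonzero entries of $\beta(J_k)$ are exactly $S$. The shift $r$ from Definition \ref{defn:givenbypolys} will serve unchanged as the $r$ in Definition \ref{defn:stabilize}, so after this the only thing left is bookkeeping between the $(i, rk+i+j)$ indexing of the polynomial definition and the $(i, rk+j)$ indexing of the stabilization definition.

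To set up, let $r$, $k_0(i,j)$, and $P_{i,j}(k)$ be as in Definition \ref{defn:givenbypolys}, let $N$ be the constant bounding the number of nonzero entries of $\beta(J_k)$ for $k > k_0$, and define $S = \{(i,j) : P_{i,j} \not\equiv 0\}$. The central observation is that $|S| \leq N$. Indeed, if $S$ contained $N+1$ pairs $(i_1,j_1),\ldots,(i_{N+1},j_{N+1})$, then each $P_{i_s,j_s}$ is a nonzero polynomial and hence has only finitely many integer roots; one can therefore find a single $k$ large enough to satisfy $k > k_0$, $k > k_0(i_s,j_s)$ for every $s$, and $P_{i_s,j_s}(k) \neq 0$ for every $s$ simultaneously. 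At such a $k$, the diagram $\beta(J_k)$ would contain at least $N+1$ nonzero entries, contradicting the bounded-entries hypothesis. So $S$ is finite.

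With $|S|$ finite, pick $k^* \geq k_0$ exceeding every $k_0(i,j)$ and every integer root of $P_{i,j}$ for $(i,j) \in S$. Then for $k \geq k^*$, every $(i,j) \in S$ gives $\beta_{i,rk+i+j}(J_k) = P_{i,j}(k) \neq 0$, and every $(i,j) \notin S$ gives $\beta_{i,rk+i+j}(J_k) = P_{i,j}(k) = 0$ via the polynomial condition. Thus the set of nonzero shifted positions is $S$ independent of $k$, and substituting $j' = i+j$ yields the biconditional in Definition \ref{defn:stabilize} with stabilization index at most $k^*$. The only technical subtlety is ensuring the polynomial identity applies to all $(i,j) \notin S$ at our chosen $k^*$ as well --- i.e., that we have not missed any transient nonzeros at positions with polynomial $\equiv 0$ but with large individual threshold $k_0(i,j)$; the bounded-entries hypothesis is precisely what permits us to confine attention to the finite collection $S$ and absorb the relevant per-position thresholds into a single $k^*$.
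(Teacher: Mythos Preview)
Your argument is sound up to the last paragraph, and you have correctly located the real difficulty: for positions $(i,j)\notin S$ you invoke the identity $\beta_{i,rk+i+j}(J_k)=P_{i,j}(k)=0$, but that identity is only guaranteed for $k>k_0(i,j)$, and your $k^*$ was chosen to dominate the thresholds $k_0(i,j)$ only for the finitely many $(i,j)\in S$. The bounded-entries hypothesis does \emph{not} let you ``absorb the relevant per-position thresholds'' for the infinitely many $(i,j)\notin S$; a numerical bound on the number of nonzero entries at each $k$ says nothing about where those entries sit. In fact, the lemma as literally stated is false: take $J_k=(x^k,\,y^{k^2})\subseteq K[x,y]$. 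Each $\beta(J_k)$ has exactly three nonzero entries, and with $r=1$ one has $P_{0,0}\equiv 1$ and $P_{i,j}\equiv 0$ for every other $(i,j)$ (with suitable, unbounded, thresholds $k_0(i,j)$), so the entries are given by polynomials in the sense of Definition~\ref{defn:givenbypolys}. Yet the nonzero entries occupy shifted positions $(0,0)$, $(0,k^2-k)$, $(1,k^2-1)$, which wander with $k$; one checks that no choice of $r$ in Definition~\ref{defn:stabilize} makes the shape stabilize.

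The paper itself gives no argument beyond ``immediate from the definitions,'' so this is really a gap in the paper's formulation rather than a flaw in your reasoning. What the paper actually \emph{uses} (see Lemma~\ref{lem:ginwindow}) is the stronger hypothesis that the nonzero entries of $\beta(J_k)$ lie in a fixed finite window of shifted positions for all large $k$. Under that hypothesis your proof goes through cleanly: only finitely many pairs $(i,j)$ are ever relevant, so $k^*$ can be taken to exceed $k_0(i,j)$ for all of them at once, and the hand-wave in your final sentence becomes an honest step.
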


\begin{question}
\label{question:shapeimplies}
Let $\{ J_k \}$ be a graded system of ideals.  Does stabilization of the shape of $\beta(J_k)$ imply that the entries of $\beta(J_k)$ are given by polynomials? If not, what conditions are necessary?
\end{question}

We now consider the asymptotic behaviour of the graded Betti numbers of other graded systems of ideals. Our main stabilization result concerns graded systems built by taking the generic initial ideals of powers of an equigenerated ideal.  

\begin{thm}\label{thm:mainthm}
Let $I$ be an equigenerated ideal in $S = K[x_1, \dots, x_n]$, where $K$ a field of characteristic 0. Consider the graded reverse lexicographic generic initial system, $\{ \text{gin}(I^k) \}$.  
\begin{enumerate}
\item[a)]  The shape of $ \beta(\text{gin}_{\text{revlex}}(I^k))$ stabilizes.
\item[b)]  The entries of $\beta(\text{gin}_{\text{revlex}}(I^k))$ are given by polynomials for $k>>0$.
\end{enumerate}
\end{thm}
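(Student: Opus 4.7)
The plan is to push the Betti numbers of $J_k := \text{gin}_{\text{revlex}}(I^k)$ through the Eliahou--Kervaire formula, re-express the resulting counts in terms of Hilbert functions of truncations of $J_k$, and evaluate those Hilbert functions via generic hyperplane sections of $I^k$ combined with Theorem \ref{thm:polystabilization} on a smaller polynomial ring. Since $\text{char}(K) = 0$, each $J_k$ is strongly stable, so
\[
\beta_{i, i+d}(J_k) \;=\; \sum_{s=1}^n m_d^{(s)}(J_k)\binom{s-1}{i},
\]
where $m_d^{(s)}(J_k)$ counts minimal generators of $J_k$ of degree $d$ with largest variable $x_s$; proving (b) thus reduces to showing each $m_{rk+j}^{(s)}(J_k)$ is polynomial in $k$ for $k \gg 0$.

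Let $J_k^{(s)} := J_k \cap K[x_1, \dots, x_s]$, a strongly stable ideal in the smaller ring. The Eliahou--Kervaire $K$-basis decomposition applied to $J_k^{(s)}$ yields the upper-triangular convolution
\[
\dim_K (J_k^{(s)})_d \;=\; \sum_{s' \le s,\ d' \le d} m_{d'}^{(s')}(J_k)\, \binom{d - d' + s' - 1}{s' - 1},
\]
which inverts (via multiplication by $(1-t)^s$ on generating functions) to
\[
m_d^{(s)}(J_k) \;=\; \sum_{i=0}^{s} (-1)^i \binom{s}{i}\Bigl[\dim_K(J_k^{(s)})_{d-i} - \dim_K(J_k^{(s-1)})_{d-i}\Bigr].
\]
So it suffices to show each Hilbert value $\dim_K (J_k^{(s)})_{rk+j'}$ is polynomial in $k$ for $k \gg 0$. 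Since $J_k$ is monomial, $H(K[x_1,\dots,x_s]/J_k^{(s)},\, d) = H(S/(J_k + (x_{s+1},\dots,x_n)),\, d)$. The characterizing Hilbert-function property of the revlex gin in generic coordinates (Bayer--Stillman, Galligo), applied iteratively to the tail $(x_{s+1},\dots,x_n)$, equates this with $H(S/(I^k + (\ell_{s+1},\dots,\ell_n)),\, d)$ for generic linear forms $\ell_{s+1}, \dots, \ell_n$. The quotient ring $\bar S := S/(\ell_{s+1}, \dots, \ell_n)$ is a polynomial ring in $s$ variables and the image $\bar I$ of $I$ is equigenerated in degree $r$; applying Theorem \ref{thm:polystabilization} to $\bar I$---and using that the Hilbert function of $\bar I^k$ is a finite alternating sum of its graded Betti numbers, indexed over a bounded region thanks to stabilization of regularity and projective dimension---shows $H(\bar S/\bar I^k,\, rk+j')$ is polynomial in $k$ for $k \gg 0$. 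Combining the displays gives (b), and (a) follows from Lemma \ref{lem:2conditions}, since $\text{reg}(J_k) = rk + c$ and $\text{pd}(J_k)$ are eventually constant (by CHT, Brodmann, and depth preservation for the revlex gin), so $\beta(J_k)$ has bounded support for $k \gg 0$.

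The main obstacle is the Hilbert-function identity in the preceding paragraph: the fact that modding the tail $(x_{s+1}, \dots, x_n)$ out of $\text{gin}_{\text{revlex}}(I^k)$ produces the same Hilbert function as modding $I^k$ by generic linear forms. This is a feature exclusive to the reverse lexicographic order---it relies on $x_n$ being the revlex-smallest variable, so that a revlex Gröbner basis of $I$ together with $\{x_n\}$ remains a Gröbner basis of $I + (x_n)$---and is precisely what allows the reduction to a smaller polynomial ring and the subsequent application of Theorem \ref{thm:polystabilization}.
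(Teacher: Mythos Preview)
Your approach is correct and genuinely different from the paper's. The paper proceeds by a double induction: an outer induction on the number of variables $n$ (passing from $I$ to a single generic hyperplane section $I_h$), and at each stage an inner induction on the row index $j$, using consecutive cancellation to write $\beta_{i,rk+i+j}(\text{gin}(I^k)) = P_{i,j}(k) + c_{i,j}(k)$ and then showing the cancellation numbers $c_{i,j}(k)$ are polynomial row by row. Your route bypasses both inductions: you reduce directly to the generator counts $m_{rk+j}^{(s)}(J_k)$, express each as a fixed finite $\mathbb{Z}$-linear combination of Hilbert function values $\dim_K (J_k^{(s')})_{rk+j'}$, identify those with $H(\bar S/\bar I^k, rk+j')$ for the iterated generic hyperplane section $\bar I$ in $s'$ variables, and then read off polynomiality from Theorem~\ref{thm:polystabilization} applied to $\bar I$ together with shape stabilization of $\beta(\bar I^k)$. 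Both arguments rest on the same revlex-specific fact (Theorem~\ref{thm:hyperplanerestriction}), but yours packages the reduction to fewer variables all at once rather than one variable at a time, and replaces the consecutive-cancellation bookkeeping with a clean Hilbert-function identity.

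One correction: your displayed convolution and inversion formulas are miscomputed. With the Eliahou--Kervaire decomposition $w = m\cdot u$, $u \in K[x_{\max(m)},\dots,x_s]$, the correct Hilbert series identity is $H_s(t) = \sum_{s'\le s} M_{s'}(t)/(1-t)^{\,s-s'+1}$, and inverting gives $M_s(t) = (1-t)H_s(t) - H_{s-1}(t)$, i.e.\ $m_d^{(s)} = \dim(J_k^{(s)})_d - \dim(J_k^{(s)})_{d-1} - \dim(J_k^{(s-1)})_d$. Your stated formula $(1-t)^s[H_s(t)-H_{s-1}(t)]$ does not isolate $M_s(t)$. This does not damage the argument, since any correct inversion still expresses $m_d^{(s)}$ as a finite linear combination of the Hilbert values you control; just replace the formula.
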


When no monomial order is specified,  $\text{gin}(I)$ represents the generic initial ideal under the reverse lexicographic order and $\text{lexgin}(I)$  represents the generic initial ideal under the lexicographic order.

\begin{example}\label{example:ginstabilization}
Let $I = (x^2y+z^3, xyz, yz^2)$.  Under the reverse lexicographic order, the Betti diagrams of  $\text{gin}(I^k)$ for small values of $k$ are as follows.
$$\bordermatrix{ {\beta(\text{gin}(I)):} & 0 & 1  \cr
					3 & 3 & 2\cr
					4 & 1 & 1\cr
	}
	\qquad \bordermatrix{ {\beta(\text{gin}(I^2)):} & 0 & 1  \cr
					6 & 6 & 5\cr
					7 & 1 & 1\cr
	}
	\qquad  \bordermatrix{ {\beta(\text{gin}(I^3)):} & 0 & 1 & 2  \cr
					9 & 10 & 10 & 1\cr
					10 & 1 & 1 & -\cr
	}
	$$
	
$$\bordermatrix{ {\beta(\text{gin}(I^4)):} & 0 & 1 & 2  \cr
					12 & 15 & 17 & 3\cr
					13 & 1 & 1 & -
	}
	\qquad \bordermatrix{ {\beta(\text{gin}(I^2)):} & 0 & 1 & 2 \cr
					15 & 21 & 26 & 6\cr
					16 & 1 & 1 & -\cr
	}
	\qquad  \bordermatrix{ {\beta(\text{gin}(I^6)):} & 0 & 1 & 2 \cr
					18 & 28 & 37 & 10\cr
					19 & 1 & 1 & -
	}
	$$
	We see that $\beta(\text{gin}(I^k))$ stabilizes to the following shape
$$\bordermatrix{ {\beta(\text{gin}(I^k)):} & 0 & 1 & 2  \cr
					3k & a & b & c \cr
					3k+1 &e & d & -
	}$$
	and its stabilization index is 3.  The non-zero polynomials guaranteed by part (b) of Theorem \ref{thm:mainthm} are:
	$$a = \dfrac{k^2}{2}+ \dfrac{3k}{2}+1, \quad b = k^2+1, \quad c = \dfrac{k^2}{2} - \dfrac{3}{2}k+1, \quad d=e = 1.$$

Under the lexicographic order, the number of nonzero rows in the Betti diagram of $\text{lexgin}(I^k)$ increases as $k$ increases.  For example,

$$\bordermatrix{ {\beta(\text{lexgin}(I)):} & 0 & 1 & 2  \cr
					3 & 3 & 3 & 1\cr
					4 & 2 & 3 & 1\cr
					5 & 1 & 2& 1\cr
					6 & 1 & 1 & -
	}
	\qquad \bordermatrix{ {\beta(\text{lexgin}(I^2)):} & 0 & 1  & 2\cr
					6 & 6  & 8 & 3 \cr
					7 & 4 & 7 & 3\cr
					8 & 3 & 5 & 2\cr
					9 & 2 & 4 & 2\cr
					10 & 2 & 3 & 1\cr
					11 & 1 & 2 & 1\cr
					12 & 1 & 1 & -
	}
	\qquad  \bordermatrix{ {\beta(\text{lexgin}(I^3)):} & 0 & 1 & 2  \cr
					9 & 10 & 15 & 6\cr
					10 & 6 & 10 & 4\cr
					11 & 4 & 8 & 4\cr
					12 & 4 & 7 & 3\cr
					13 & 3 & 6 & 3\cr
					14 & 3 & 5 & 2\cr
					15 & 2 & 4 & 2\cr
					16 & 2 & 3 & 1\cr
					17 & 1 & 2 & 1\cr
					18 & 1 & 1 &-\cr
	}
	$$
We can show that $\text{reg}(\text{lexgin}(I^k)) = 6k$ while the minimal degree generators of $\text{lexgin}(I^k)$ are of degree $3k$.  Therefore, the Betti diagram of $\text{lexgin}(I^k)$ has nonzero entries in $3k+1$ rows, and shape of the Betti diagrams does not stabilize.  However, it appears that for any pair $(i,j)$, $\beta_{i, 3k+i+j}(I^k)$ is a polynomial for $k>>0$, implying a negative answer to Question \ref{question:shapeimplies} for reverse lexicographic generic initial systems.  For example, 
$$\beta_{1, 3k+1+1}(\text{lexgin}(I^k)) = 4k-2$$
for $k \geq 2$. 
\end{example}

While the shape Betti diagrams of the lexicographic generic initial system $\{ \text{lexgin}(I^k) \}$ do not stabilize in the sense of Definition \ref{defn:stabilize}, there appears to be additional uniformity beyond its entries being given by polynomials.  For example, rows 8 through 12 in $\beta(\text{lexgin}(I^2))$ are identical to rows 14 through 18 of $\beta(\text{lexgin}(I^3))$. Similar patterns hold for higher members in the graded system.  Further, while the $I$ above shows that the number of nonzero entries of $\beta(\text{lexgin}(I^k))$ is not always bounded, there are circumstances under which it does, even when $\text{lexgin}(I^k) \neq \text{gin}(I^k)$ (for example, consider $I = (x^2, xy, z^2)$).  Observations like those lead to the following questions.

\begin{question}
Are the entries of $\beta(\text{lexgin}(I^k))$ given by polynomials for all $k>>0$?  Under what conditions does is the number of nonzero entries in $\beta(\text{lexgin}(I^k))$ bounded?  What other types of uniformity do these Betti diagrams exhibit as $k \rightarrow \infty$?
\end{question}

%\begin{question}
%Let $r$ be the degree of a minimal generator of $I$.  Under what conditions does there exist an integer $c$ such that $\text{reg}(\text{lexgin}(I^k)) = rk+c$ for all $k>>0$? 
%\end{question}

The following example demonstrates that generic coordinates are necessary for Theorem \ref{thm:mainthm}.

\begin{example}
Let $I = (x^2y+z^3, xyz, yz^2)$ and consider the graded system obtained by taking the lexicographic or reverse lexicographic initial ideals of $I^k$, $\{ \text{in}(I^k) \}$.  The regularity of $\text{in}(I^k)$ is increasing at a faster rate than the degree of the minimal generators of $I^k$.  In particular, there are $k+1$ rows in $\beta(\text{in}(I^k))$ with nonzero entries, so the shape of  $\beta(\text{in}(I^k))$ does not stabilize.  

We might hope that the entries in $\beta(\text{in}(I^k))$ are given by polynomials for $k>>0$.  However, if we look more carefully at its entries we see that this does not hold.  Consider, for example,  $\beta_{2, 3k+2}(\text{in}(I^k))$: 

\begin{center}
\begin{tabular}{ l | c | c| c| c| c| c| c| c}
$k$ & 3 & 4 & 5 & 6 & 7 & 8 & 9 & 10\\ 
\hline
$\beta_{2, 3k+2}(\text{in}(I^k))$ & 2 & 4 & 8 & 12 & 18 & 24 & 32 & 40
\end{tabular}  
\end{center}

For all $k \geq 3$, 
$$\beta_{2, 3k+2}(\text{in}(I^k)) = \begin{cases}
0.5k^2-k+0.5 & \text{ for odd $k$}\\
0.5k^2 - k & \text{ for even $k$}
\end{cases}
$$
This function cannot be written as a single polynomial for all $k>>0$.  
\end{example}

As in the lexicographic generic initial system, there appears to be some asymptotic uniformity in the Betti diagrams of the initial system, although it is more complicated than in generic  coordinates.  Similar statements may be made for the Betti diagrams of other graded systems of ideals, including $\{ I^k \}$ when $I$ is not  equigenerated and the system $\{I^{(k)} \}$ composed of the symbolic powers of an ideal.  This leads to the following question:

\begin{question}
In what ways may the Betti diagrams of a graded system of ideals $\{ J_k \}$ display uniformity as $k \rightarrow \infty$?  How can we describe more complicated types of stabilization?  In addition to the systems $\{ I^k \}$ and $\{\text{gin}(I^k)\}$ arising from equigenerated ideals $I$, are there other natural graded systems of ideals for which the shape of $\beta(J_k)$ stabilizes in the sense of Definition \ref{defn:stabilize}?
\end{question}

Bagheri, Chardin, and Ha address similar questions in \cite{BCH11} where they describe the asymptotic behaviour of the graded Betti numbers of multi-graded systems of ideals.

Section \ref{sec:background} of this paper provides background on the resolutions of generic initial ideals and Section \ref{sec:proof} contains the proof of Theorem \ref{thm:mainthm}. Additional corollaries of Theorem \ref{thm:mainthm} are included in Section \ref{subsec:consequences}, including a description of the Boij-S\"{o}derberg decompositions of $\beta(\text{gin}(I^k))$.

\section{Background: Graded Betti Numbers of Generic Initial Ideals}
\label{sec:background}

In this section we record facts related to the graded Betti numbers of generic initial ideals.  Throughout, $S = K[x_1, \dots, x_n]$ is a polynomial ring with the standard grading over a field of characteristic zero and $I$ is a homogeneous ideal in $S$.  We use $\text{gin}_{\tau}(I)$ to  denote the generic initial ideal under the term order $\tau$ and $\text{gin}(I)$ to denote the generic initial ideal under the reverse lexicographic order. 

To describe the graded minimal free resolution of $\text{gin}(I^k)$, for any monomial $m$, we will denote
$$\text{max}(m) = \text{max} \{i : x_i \text{ divides } m \}.$$

\begin{defn}[See Construction 28.6 in \cite{Peeva10}]
Let $M$ be a Borel-fixed ideal minimally generated by monomials $m_1, \dots, m_{\ell}$.  For each $m_i$ and for each sequence of natural numbers $1 \leq j_1 < \cdots < j_p < \text{max}(m_i)$, let $S(m_i;x_{j_1} \cdots x_{j_p})$ denote a free $S$-module with one generator in degree $\text{deg}(m_i) + p$. The basis of the \textbf{Eliahou-Kervaire resolution} of $M$ is given by 
 $$\mathcal{B} = \{ S(m_i; j_1, \dots, j_p) : 1 \leq j_1 < \cdots < j_p < \text{max}(m_i), 1 \leq i \leq r \}$$  
 where  $S(m_ix_{j_1} \cdots x_{j_p})$ is in homological degree $p$.. 
\end{defn}

Recalling that $\text{gin}_{\tau}(I)$ is Borel-fixed (\cite{Galligo74}), we obtain the following result.

\begin{thm}[\cite{EK90}]\label{thm:EKres}
For any  homogeneous ideal $I \subseteq S$,  the Eliahou-Kervaire resolution produces a minimal free resolution of $\text{gin}_{\tau}(I)$. 
\end{thm}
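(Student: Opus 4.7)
The plan is to combine two classical ingredients: Galligo's theorem, which tells us that $\text{gin}_\tau(I)$ is Borel-fixed, with the Eliahou--Kervaire construction proper, which only needs the hypothesis that the monomial ideal be stable. In characteristic zero a Borel-fixed ideal is automatically strongly stable (i.e.\ if $m \in M$ and $x_i \mid m$, then $(x_j/x_i)m \in M$ for all $j < i$), so we may reduce immediately to proving the following: if $M \subseteq S$ is a stable monomial ideal with minimal generators $m_1,\dots,m_\ell$, then the complex on the basis $\mathcal B$ is a minimal free resolution of $M$.

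The first step in that reduction is to pin down the combinatorial backbone of the construction: every monomial $u \in M$ has a \emph{unique} decomposition $u = m_i \cdot w$ with $m_i$ a minimal generator and $w \in K[x_{\max(m_i)},\dots,x_n]$. This follows from stability by a short argument: among all factorizations $u = m_j \cdot w'$ take the one with $\max(m_j)$ smallest, and use stability to rule out alternative choices. Using this, one defines the differential on a basis element $S(m_i; x_{j_1}\cdots x_{j_p})$ as an alternating sum of two kinds of terms: the obvious Koszul-type terms $\pm x_{j_k} \cdot S(m_i; x_{j_1}\cdots \widehat{x_{j_k}} \cdots x_{j_p})$, and correction terms coming from the unique decomposition of $x_{j_k} m_i$ in $M$. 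A direct bookkeeping computation, which I would present as a short lemma, verifies $d^2 = 0$.

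The main obstacle is exactness, and here the cleanest route is an induction by mapping cones over the filtration $M_1 \subset M_2 \subset \cdots \subset M_\ell = M$, where $M_s = (m_1,\dots,m_s)$ is ordered so that each $m_s$ has $\deg m_s$ nondecreasing (or more precisely with $\max(m_s)$ nondecreasing within each degree). The colon ideal $M_{s-1} : m_s$ is generated in characteristic zero by the variables $x_1,\dots,x_{\max(m_s)-1}$ thanks to stability, and the Koszul complex on those variables contributes precisely the basis elements $S(m_s; x_{j_1}\cdots x_{j_p})$ with $j_p < \max(m_s)$. Iterating the mapping cone construction therefore produces exactly the complex built on $\mathcal B$, and exactness is preserved at each stage.

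Finally, minimality is almost a freebie: each differential coefficient is either $\pm x_{j_k}$ with $j_k < \max(m_i) \leq n$, or comes from a decomposition $x_{j_k} m_i = m_{i'} \cdot w$, in which case $w$ is a nonconstant monomial (because $j_k < \max(m_i)$ forces a nontrivial rearrangement under the unique decomposition). Either way every entry lies in $\mathfrak m = (x_1,\dots,x_n)$, so the resolution is minimal. Applying this to $M = \text{gin}_\tau(I)$ yields the theorem.
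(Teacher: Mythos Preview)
Your sketch is correct and follows the standard line of argument from the original Eliahou--Kervaire paper (and its textbook treatments, e.g.\ Peeva's \emph{Graded Syzygies}): reduce to stable monomial ideals via Galligo's theorem and the characteristic-zero identification of Borel-fixed with strongly stable, then build the resolution by iterated mapping cones along the filtration $M_s=(m_1,\dots,m_s)$, using that $M_{s-1}:m_s=(x_1,\dots,x_{\max(m_s)-1})$.

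However, there is nothing to compare against: the paper does not prove this theorem. It is stated as a cited result from \cite{EK90}, preceded only by the one-line remark that $\text{gin}_\tau(I)$ is Borel-fixed. So your proposal is not an alternative to the paper's proof but rather a (correct) outline of the proof that the paper is importing wholesale from the literature.
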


%Therefore, to find the graded Betti numbers of a Borel-fixed ideal, it is enough to know the degrees of its minimal generators and the maximum index variable that appears in each one.  
 
\begin{cor}\label{cor:EKdegs}
Let $M$ be a Borel-fixed ideal minimally generated by monomials $m_1, \dots, m_{\ell}$.  Then the graded Betti numbers of $M$ are given by 
$$\beta_{p, p+q}(M) = \sum_{\text{deg}(m_i) = q, 1 \leq i \leq \ell} {\text{max}(m_i) - 1 \choose p}$$
\end{cor}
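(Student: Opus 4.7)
The proof is essentially a bookkeeping consequence of Theorem \ref{thm:EKres}: once we know that the Eliahou--Kervaire complex is a minimal free resolution of $M$, the graded Betti numbers are simply the ranks of its free modules in each bidegree, and we only need to count basis elements.

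The plan is to fix $p$ and $q$ and count the basis elements $S(m_i;x_{j_1}\cdots x_{j_p})$ of the Eliahou--Kervaire resolution that sit in homological degree $p$ and internal degree $p+q$. By the definition of the resolution, such an element contributes a free summand in homological degree $p$ and internal degree $\deg(m_i)+p$. Setting $\deg(m_i)+p = p+q$ forces $\deg(m_i)=q$, so only those minimal generators $m_i$ of degree $q$ can contribute.

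Next I would count, for each fixed $m_i$ with $\deg(m_i)=q$, how many basis elements $S(m_i;x_{j_1}\cdots x_{j_p})$ lie in homological degree $p$. By the definition of $\mathcal{B}$, these are indexed by strictly increasing sequences $1\le j_1 < \cdots < j_p < \mathrm{max}(m_i)$, i.e., by $p$-element subsets of $\{1,2,\dots,\mathrm{max}(m_i)-1\}$. There are exactly $\binom{\mathrm{max}(m_i)-1}{p}$ such subsets.

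Finally I would sum over all minimal generators of degree $q$ and invoke Theorem \ref{thm:EKres} (minimality of the resolution), which guarantees that these basis element counts equal $\beta_{p,p+q}(M)$; no cancellations occur. This yields the stated formula. There is no real obstacle here: the only point to be careful about is the off-by-one in the bound $j_p < \mathrm{max}(m_i)$ (as opposed to $\le$), which is exactly what produces $\mathrm{max}(m_i)-1$ rather than $\mathrm{max}(m_i)$ in the binomial coefficient.
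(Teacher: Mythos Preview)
Your proposal is correct and matches the paper's approach: the corollary is stated there without proof, as an immediate consequence of counting the basis elements $S(m_i;x_{j_1}\cdots x_{j_p})$ of the Eliahou--Kervaire resolution in each bidegree, exactly as you do. The only cosmetic point is that Theorem~\ref{thm:EKres} is phrased for $\text{gin}_\tau(I)$ rather than an arbitrary Borel-fixed $M$, but in characteristic~$0$ every Borel-fixed ideal satisfies $M=\text{gin}(M)$, so this is no obstacle.
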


The following definition and theorem say that the Betti diagrams of an ideal and any of its generic initial ideals are related by cancelling numbers along diagonals.  

\begin{defn}[see \cite{Peeva10}]
We say that a sequence of numbers $\{ q_{i,j} \}$ is obtained from a sequence of numbers $\{ p_{i,j} \}$ by a \textbf{consecutive cancellation} if there are indices $s$ and $r$ such that 
\begin{eqnarray*}
q_{s, r} &=& p_{s,r} - 1, \quad q_{s+1, r} = p_{s+1, r}-1\\
q_{i,j} &=& p_{i,j} \text{ for all other values of } i,j
\end{eqnarray*}
\end{defn}

\begin{thm}[\cite{Green98}]
\label{thm:consecutivecancellation}
Let $I$ be a homogeneous ideal in $S$.  For any term order $\tau$, the graded Betti numbers $\beta_{i,j}(I)$ may be obtained from the graded Betti numbers $\beta_{i,j}(\text{gin}_{\tau}(I))$ by a sequence of consecutive cancellations.   
\end{thm}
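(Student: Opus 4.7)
The plan is to realize the passage from $I$ to $\text{gin}_{\tau}(I)$ as a Gröbner flat degeneration and then compare minimal free resolutions across the generic and special fibers. First I would observe that every $g \in GL_n(K)$ induces a graded $K$-algebra automorphism of $S$, so $\beta_{i,j}(g \cdot I) = \beta_{i,j}(I)$. Choosing a generic $g$ and setting $J := g \cdot I$ gives $\text{gin}_{\tau}(I) = \text{in}_{\tau}(J)$, so the theorem reduces to the following statement: for every homogeneous ideal $J \subseteq S$ and every term order $\tau$, the Betti table $\beta(J)$ is obtained from $\beta(\text{in}_{\tau}(J))$ by a sequence of consecutive cancellations.

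Next I would invoke the standard one-parameter Gröbner flat family. Pick a weight vector $w \in \mathbb{Z}_{\geq 0}^n$ refining $\tau$ on the (finitely many) monomials appearing in some Gröbner basis of $J$, so that $\text{in}_{w}(J) = \text{in}_{\tau}(J)$. The $w$-homogenization $\widetilde{J} \subseteq S[t]$, obtained by $t$-homogenizing a Gröbner basis, is flat over $K[t]$, with generic fiber $\widetilde{J}|_{t = 1} \cong J$ and special fiber $\widetilde{J}|_{t = 0} = \text{in}_{\tau}(J)$. Let $F_\bullet$ be the minimal graded free resolution of $J$ over $S$. I would lift it to a graded free resolution $\widetilde{F}_\bullet$ of $\widetilde{J}$ over $S[t]$ with the same ranks and twists by $t$-homogenizing the differentials. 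Because $S[t]/\widetilde{J}$ is flat over $K[t]$, we have $\text{Tor}_i^{K[t]}(S[t]/\widetilde{J}, K) = 0$ for $i > 0$, so specializing $\widetilde{F}_\bullet$ at $t = 0$ yields a (in general non-minimal) graded free $S$-resolution of $\text{in}_{\tau}(J)$ whose ranks and twists encode exactly the Betti numbers $\beta_{i,j}(J)$.

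To finish, I would peel off trivial summands to descend to the minimal resolution of $\text{in}_{\tau}(J)$. Any graded free resolution of a module splits as the direct sum of its minimal free resolution and an acyclic complex $T_\bullet$ of graded free modules whose differentials are homogeneous isomorphisms. Since those differentials have internal degree zero, $T_\bullet$ further decomposes as a direct sum of trivial summands of the form $S(-r) \xrightarrow{\text{id}} S(-r)$ placed in consecutive homological positions $s, s+1$. Removing one such summand decreases $\beta_{s,r}$ and $\beta_{s+1,r}$ each by exactly one, which is precisely a consecutive cancellation in the sense of the definition. Iterating converts the bookkeeping $\beta(J)$ of the specialized resolution into $\beta(\text{in}_{\tau}(J))$; reading the process in reverse realizes $\beta(J)$ as a sequence of consecutive cancellations applied to $\beta(\text{in}_{\tau}(J))$, as required.

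The main obstacle will be the middle step: producing the lift $\widetilde{F}_\bullet$ and verifying that its $t = 0$ fiber remains exact. The cleanest route is to use flatness of $\widetilde{J}$ over $K[t]$, together with the resulting $\text{Tor}$-vanishing, both to exhibit a $t$-homogeneous lift of $F_\bullet$ with the correct ranks and to guarantee that the lift stays a resolution after setting $t = 0$; once this is in hand, the combinatorial peeling in the final step is entirely formal.
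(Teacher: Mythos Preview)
The paper does not prove this theorem; it is quoted from \cite{Green98} and used as a black box, so there is no in-paper proof to compare against. I will therefore just assess the argument.

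Your overall strategy (Gr\"obner flat family plus peeling of trivial summands) is the standard one, but the lift runs in the wrong direction, and this makes the argument fail. You start from the minimal free resolution $F_\bullet$ of $J$, lift it to $\widetilde{F}_\bullet$ over $S[t]$, and then specialize at $t=0$ to obtain a free resolution of $\text{in}_\tau(J)$ whose ranks are the numbers $\beta_{i,j}(J)$. If that were possible, minimalizing would give $\beta_{i,j}(\text{in}_\tau(J)) \le \beta_{i,j}(J)$ for all $i,j$, which is false in general (upper semicontinuity goes the other way). Concretely, take $J=(x^2,\,xy+y^2)\subseteq K[x,y]$ with the lex order: $J$ is a complete intersection with $\beta_{0,2}=2$, $\beta_{1,4}=1$, while $\text{in}(J)=(x^2,xy,y^3)$ has three minimal generators, so no free resolution of $\text{in}(J)$ can start with $S(-2)^2$. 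Your final sentence, ``reading the process in reverse,'' does not repair this: the reverse of a consecutive cancellation is an \emph{addition}, not a cancellation, so even if your construction worked it would prove the opposite of what is claimed. There is also a technical gap earlier: $t$-homogenizing the differentials of $F_\bullet$ need not produce a complex, since homogenization does not commute with matrix multiplication.

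The fix is to lift from the special fiber rather than the generic one. Take the minimal (bi)graded free resolution $G_\bullet$ of $\widetilde{J}$ over $S[t]$. Flatness of $S[t]/\widetilde{J}$ over $K[t]$ guarantees that both $G_\bullet|_{t=0}$ and $G_\bullet|_{t=1}$ are acyclic. At $t=0$ the entries of the differentials land in $(x_1,\dots,x_n)$, so $G_\bullet|_{t=0}$ is the \emph{minimal} free resolution of $\text{in}_\tau(J)$ and the ranks of $G_\bullet$ are exactly $\beta_{i,j}(\text{in}_\tau(J))$. At $t=1$ you obtain a (generally non-minimal) free resolution of $J$ with those same ranks; peeling off trivial summands as in your last paragraph then exhibits $\beta_{i,j}(J)$ as obtained from $\beta_{i,j}(\text{in}_\tau(J))$ by consecutive cancellations, which is the desired direction.
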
  

\begin{example}\label{example:consecutivecancellation}
Let $I = (x^2y+z^3, xyz, yz^2)$. The Betti diagram of $\text{gin}(I^3)$ is 

$$\bordermatrix{ ~ & 0 & 1 & 2   \cr
					9 & 10 & 10 & 1 \cr
					10 & 1 & 1& -\cr 
	}.$$
Theorem \ref{thm:consecutivecancellation} says that $\beta(I^3)$ will be obtained by making consecutive cancellations.  There are two possible consecutive cancellations in this case:  cancelling $\beta_{0, 10}(\text{gin}(I^3))$ with $\beta_{1, 10}(\text{gin}(I^3))$ and cancelling $\beta_{1, 11}(\text{gin}(I^3))$ with $\beta_{2,11}(\text{gin}(I^3))$.  Using Macaulay2, we find that only the first consecutive cancellation is needed and $\beta(I^3)$ is given by
 $$\bordermatrix{ ~ & 0 & 1 & 2   \cr
					9 & 10 & 9 & 1 \cr
					10 & - & 1& -\cr 
	}.$$
	\end{example}
This example demonstrates that not all possible consecutive cancellations need to occur when we pass from the Betti digram of $\text{gin}(I)$ to the Betti diagram of $I$. In general, there are no techniques for determining which consecutive cancellations will occur.

\begin{defn}
A Betti number $\beta_{i,i+j}(I) \neq 0$ is called \textbf{extremal} if $\beta_{k,k+l}(I) = 0$ for all pairs $(k,l) \neq (i,j)$ with $k\geq i$ and $l \geq j$.  An extremal Betti number is the upper left-hand corner of a block of zeros within a Betti diagram.
\end{defn}

Notice that if $I$ is an equigenerated ideal, the extremal Betti numbers of $I^k$ occur in the same relative positions for all $k>>0$ by Theorem \ref{thm:stabilization}.  Our interest in extremal Betti numbers comes as a result of the following theorem, which says that the reverse lexicographic generic initial ideal preserves extremal Betti numbers.

\begin{thm}[Theorem 4.3.17 of \cite{BCP99}]\label{thm:extremalbetti} %%only for reverse lexicographic order; also see Ch. 4 of Monomial Ideals by Herzog-Hibi
Let $I$ be a homogeneous ideal of $S$.  Then for any $i, j \subset \mathbb{N}$, 
\begin{enumerate}
\item[(a)]  $\beta_{i,i+j}(I)$ is extremal if and only if $\beta_{i,i+j}(\text{gin}(I))$ is extremal;
\item[(b)]  if $\beta_{i,i+j}(I)$ is extremal, then $\beta_{i,i+j}(I) = \beta_{i,i+j}(\text{gin}(I))$.
\end{enumerate}
\end{thm}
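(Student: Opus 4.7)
The plan is to reduce extremality, and its numerical value, to cohomological data that is visibly preserved in passing from $I$ to $\text{gin}(I)$ under the reverse lexicographic order. My first step would be a cohomological reformulation of extremal Betti numbers. By graded local duality, the minimal free resolution of $S/I$ is controlled by the local cohomology modules $H^p_{\mathfrak{m}}(S/I)$, and one can verify that $\beta_{i,i+j}(S/I)$ is extremal precisely when the graded piece $H^{n-i}_{\mathfrak{m}}(S/I)_{j-i}$ is nonzero and sits at a corner of the local cohomology table, meaning no other graded piece $H^p_{\mathfrak{m}}(S/I)_d$ with $(p,d)$ in the relevant ``dominating'' region is nonzero. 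At such a corner one moreover gets the numerical equality
$$\beta_{i,i+j}(S/I) \;=\; \dim_K H^{n-i}_{\mathfrak{m}}(S/I)_{j-i},$$
essentially because the spectral sequence relating $\text{Ext}$ and local cohomology degenerates at the corner.

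The second step would be to establish the invariance
$$\dim_K H^p_{\mathfrak{m}}(S/I)_d \;=\; \dim_K H^p_{\mathfrak{m}}(S/\text{gin}(I))_d$$
at every corner $(p,d)$ identified above. I would proceed by induction on $n$, exploiting the defining property of the reverse lexicographic order: the variable $x_n$ is a sufficiently generic linear form for $\text{gin}(I)$, playing the role of a generic linear form on $I$ after the coordinate change implicit in $\text{gin}$. The short exact sequence
$$0 \to S/(I:x_n)(-1) \xrightarrow{x_n} S/I \to S/(I+(x_n)) \to 0$$
and its analogue for $\text{gin}(I)$, together with the compatibility $\text{gin}(I+(\ell))/(\ell) \cong \text{gin}(\overline{I})$ for $\overline{I}$ the image of $I$ modulo a generic linear form $\ell$, induce matching long exact sequences on local cohomology. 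Inducting on $n$ and comparing corner degrees yields the desired numerical equality at corners.

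Combining the two steps, the theorem follows at once: an extremal Betti number of $\text{gin}(I)$ corresponds via Step 1 to a corner of its local cohomology; Step 2 transports this corner to $I$ at the same position and with the same $K$-dimension; Step 1 applied in reverse then yields an extremal Betti number of $I$ at position $(i,i+j)$ with the same value, giving both (a) and (b). The main obstacle I anticipate is the dimensional part of Step 2: while the \emph{positions} of corners of local cohomology are controlled by coarse invariants like depth and regularity and their iterated refinements (in the spirit of Bayer--Stillman), the equality of \emph{numerical values} at the corners is more delicate and requires that $x_n$ not only preserve multiplicities of local cohomology but also the precise degree in which corner classes live. An alternative route avoiding local cohomology would be to sharpen Theorem~\ref{thm:consecutivecancellation} by showing directly that an extremal Betti number of $\text{gin}(I)$ cannot participate in any of the consecutive cancellations producing $\beta(I)$: any such cancellation would demand a nonzero partner $\beta_{i+1,i+j}$ or $\beta_{i-1,i+j}$ in $\beta(\text{gin}(I))$, and a finer combinatorial analysis of which cancellations arise from the Galligo deformation should show that these are forbidden when $(i,j)$ is a corner.
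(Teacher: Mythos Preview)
The paper does not contain a proof of this statement: Theorem~\ref{thm:extremalbetti} is quoted as background from \cite{BCP99} (Bayer--Charalambous--Popescu), so there is no in-paper argument to compare against. Your outline is in fact a faithful sketch of the original proof in \cite{BCP99}, which proceeds exactly via the local cohomology characterization you describe: one shows that $\beta_{i,i+j}(S/I)$ is extremal if and only if the graded piece $H^{n-i}_{\mathfrak{m}}(S/I)_{j-n}$ is a ``corner'' of the local cohomology table, with numerical equality at such corners, and then verifies that these corners (position and dimension) are unchanged in passing to $\text{gin}(I)$ under reverse lexicographic order, using that $x_n$ acts as a generic linear form.

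A few small points to tighten. First, check your degree shift: graded local duality gives $\beta_{i,i+j}(S/I)=\dim_K\operatorname{Ext}^i_S(S/I,S(-n))_{-i-j}$, and unwinding the duality puts the relevant local cohomology piece in degree $j-n$ rather than $j-i$. Second, your Step~2 as written aims to compare the two long exact sequences in local cohomology coming from multiplication by $x_n$ (on the $\text{gin}$ side) and by a generic linear form $\ell$ (on the $I$ side); to make this go through you also need the companion compatibility $\text{gin}(I:\ell)=\text{gin}(I):x_n$, not only the quotient compatibility you state, and you should invoke the eventual vanishing $H^p_{\mathfrak{m}}(S/I)_d=0$ for $d\gg 0$ to start a descending induction on the degree at the corner. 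With those two fixes your plan matches the published argument. Your alternative route through consecutive cancellations is not how the result is proved in the literature; making it rigorous would essentially require re-proving the local cohomology invariance in disguise, since Theorem~\ref{thm:consecutivecancellation} alone does not see which cancellations actually occur.
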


\begin{cor}[\cite{BS87}]\label{cor:regpreserved}
Under the reverse lexicographic order,
$$\text{reg}(I) = \text{reg}(\text{gin}(I)).$$
\end{cor}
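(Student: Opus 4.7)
The plan is to prove the two inequalities $\text{reg}(I) \leq \text{reg}(\text{gin}(I))$ and $\text{reg}(I) \geq \text{reg}(\text{gin}(I))$ separately, using the two main tools from this section: Theorem \ref{thm:consecutivecancellation} for one direction and Theorem \ref{thm:extremalbetti} for the other. Recall that $\text{reg}(I) = \max\{j : \beta_{i,i+j}(I)\neq 0 \text{ for some } i\}$.

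First I would establish $\text{reg}(I) \leq \text{reg}(\text{gin}(I))$. By Theorem \ref{thm:consecutivecancellation}, the graded Betti numbers of $I$ are obtained from those of $\text{gin}(I)$ by a sequence of consecutive cancellations. Since each cancellation only decrements entries and never introduces new nonzero ones, the support $\{(i,j):\beta_{i,i+j}(I)\neq 0\}$ is a subset of the support $\{(i,j):\beta_{i,i+j}(\text{gin}(I))\neq 0\}$. Taking the maximum of $j$ over each set yields the desired inequality. Note that this direction does not use that the monomial order is reverse lexicographic.

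For the reverse inequality, I would use extremality. Set $j^* = \text{reg}(\text{gin}(I))$ and let $i^*$ be the largest index for which $\beta_{i^*,i^*+j^*}(\text{gin}(I)) \neq 0$. By definition of $j^*$ there is no nonzero Betti number of $\text{gin}(I)$ in any row strictly above $j^*$, and by maximality of $i^*$ there is none in row $j^*$ with column $>i^*$. Hence $\beta_{i^*,i^*+j^*}(\text{gin}(I))$ is an extremal Betti number in the sense of the definition preceding Theorem \ref{thm:extremalbetti}. Since we are working with the reverse lexicographic generic initial ideal, part (a) of that theorem gives that $\beta_{i^*,i^*+j^*}(I)$ is also extremal, and in particular nonzero. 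Therefore $\text{reg}(I) \geq j^* = \text{reg}(\text{gin}(I))$.

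Combining both directions yields the claimed equality. The steps are short and essentially bookkeeping: the only subtlety is ensuring that an extremal Betti number in $\text{gin}(I)$ realizing the regularity actually exists, which is handled by picking the largest column index $i^*$ achieving $\beta_{i,i+j^*}(\text{gin}(I))\neq 0$ in the top nonzero row. No real obstacle arises here, since the heavy lifting is done by Theorems \ref{thm:consecutivecancellation} and \ref{thm:extremalbetti}; the corollary is essentially a direct application of extremal Betti preservation to the top row of the Betti diagram.
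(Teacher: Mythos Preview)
Your argument is correct. The paper does not give an explicit proof of this corollary; it is simply stated immediately after Theorem~\ref{thm:extremalbetti} with a citation to Bayer--Stillman, so the intended derivation is precisely from the preservation of extremal Betti numbers, as you do for the inequality $\text{reg}(I)\geq \text{reg}(\text{gin}(I))$.

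One minor remark: your use of Theorem~\ref{thm:consecutivecancellation} for the inequality $\text{reg}(I)\leq \text{reg}(\text{gin}(I))$ is valid but not strictly necessary. Since part (a) of Theorem~\ref{thm:extremalbetti} is an ``if and only if'' statement, the extremal positions in $\beta(I)$ and $\beta(\text{gin}(I))$ coincide, and the regularity of any finitely generated graded module is always realized by an extremal Betti number (take the largest column index in the top nonzero row, exactly as you did for $\text{gin}(I)$). So both inequalities follow from Theorem~\ref{thm:extremalbetti} alone. Your route via consecutive cancellation is perfectly fine, and has the small bonus of showing that $\text{reg}(I)\leq \text{reg}(\text{gin}_\tau(I))$ for \emph{any} term order $\tau$, not just reverse lexicographic.
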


Another advantage of the reverse lexicographic order is that hyperplane sections interact nicely with generic initial ideals under this order.  Consider a linear form $h = \sum_{j=1}^{n} h_j x_j \in S_1$ such that $h_n \neq 0$.  The homomorphism $\phi: S = K[x_1, \dots, x_n] \rightarrow \hat{S} = K[x_1, \dots, x_{n-1}]$ defined by $\phi(x_j) = x_j$ for $j \neq n$ and $\phi(x_n) = -\dfrac{1}{h_n}(\sum_{j \neq n} h_j x_j)$ defines an isomorphism between $S/(h)$ and $\hat{S}$ that preserves the degree of polynomials not in the kernel.  

\begin{defn}
Given an ideal $I$ in $S$, the \textbf{$h$-hyperplane section} of $I$ is the ideal $\phi(I)$ in $\hat{S}$.  It is  denoted $I_h$.   
\end{defn}

\begin{thm}[\cite{BS87}, \cite{Green98}]\label{thm:hyperplanerestriction}  %%see some geometric results arising from the borel-fixed property
Let $I$ be a homogeneous ideal in $S$ and let $h \in S_1$ be a generic linear form.  Then 
$$\text{gin}(I_h) = \text{gin}(I) |_{x_n \rightarrow 0}.$$
\end{thm}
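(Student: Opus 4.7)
The plan is to reduce Theorem~\ref{thm:hyperplanerestriction} to a monomial-level identity that exploits the defining property of the reverse lexicographic order: namely, $x_n$ divides $\text{in}_{\text{revlex}}(f)$ if and only if $x_n$ divides every monomial of $f$.

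First I would use the genericity of $h$ to pass to a convenient system of coordinates. Choosing a generic change of variables on $S$ that simultaneously carries $h$ to $x_n$ and computes the generic initial ideal, I may assume $h = x_n$, $\text{in}_{\text{revlex}}(I) = \text{gin}(I)$, and $\phi$ is the quotient map $S \to \hat{S}$ sending $x_n \mapsto 0$. A parallel generic change of coordinates on $\hat{S}$ (lifted to $S$ fixing $x_n$) reduces the theorem to proving
$$\text{in}_{\text{revlex}}(\phi(I)) \;=\; \text{in}_{\text{revlex}}(I)\big|_{x_n \to 0},$$
which in turn follows once I establish the key identity
$$\text{in}_{\text{revlex}}(I + (x_n)) \;=\; \text{in}_{\text{revlex}}(I) + (x_n),$$
since reducing both sides modulo $(x_n)$ and using that $\phi$ maps $I + (x_n)$ onto $\phi(I)$ yields the previous display.

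The core argument proves this key identity. The inclusion $\supseteq$ is immediate. For $\subseteq$, take $f \in I + (x_n)$ with $\text{in}(f) \notin (x_n)$; I will show $\text{in}(f) \in \text{in}(I)$. Write $f = g + x_n r$ with $g \in I$, and decompose $g = g_0 + x_n g_1$ with $g_0 \in \hat{S}$. Then $f = g_0 + x_n(g_1 + r)$, and the monomials of $g_0$ (none divisible by $x_n$) are disjoint from those of $x_n(g_1 + r)$ (all divisible by $x_n$). Since $\text{in}(f)$ is not divisible by $x_n$, it must be a monomial of $g_0$, whence $\text{in}(f) = \text{in}(g_0)$. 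The revlex property now guarantees $\text{in}(g) = \text{in}(g_0) \in \text{in}(I)$: if instead $\text{in}(g)$ were divisible by $x_n$, every monomial of $g$ would be divisible by $x_n$, forcing $g_0 = 0$ and contradicting $\text{in}(f) = \text{in}(g_0) \ne 0$.

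The main obstacle is the reduction step, specifically verifying that a single generic change of coordinates can simultaneously place $h$ in standard form and realize $\text{gin}(I)$ and $\text{gin}(\phi(I))$. This comes down to showing that the intersection of the Zariski open locus of ``good'' $g \in \text{GL}(S_1)$ (those computing $\text{gin}$) with the coset $\{g : g(h) = x_n\}$ remains nonempty open and projects to a generic change of coordinates on $\hat{S}$. This is a standard argument using the parabolic subgroup stabilizing the revlex flag; once it is in place, the key identity above and its reduction modulo $(x_n)$ immediately yield the theorem.
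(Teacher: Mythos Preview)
The paper does not supply its own proof of this theorem; it is quoted as a standard result with citations to Bayer--Stillman and to Green's lectures. Your argument is essentially the classical proof found in those references: reduce via a generic change of coordinates to $h=x_n$, and then invoke the characteristic property of reverse lexicographic order, namely $\text{in}_{\text{revlex}}(I+(x_n))=\text{in}_{\text{revlex}}(I)+(x_n)$. Your verification of this identity is correct (note, since $I$ is homogeneous you may take $f$, $g$, $r$ homogeneous of the same degree, which is what makes ``every monomial of $g_0$ dominates every monomial of $x_n(g_1+r)$'' true), and your identification of the reduction step---arranging a single generic element of $\text{GL}(S_1)$ that simultaneously sends $h$ to $x_n$ and computes both $\text{gin}(I)$ and $\text{gin}(I_h)$---as the point requiring care is accurate; this is handled in Green via the Borel subgroup stabilizing the revlex flag, exactly as you indicate.
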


\section{Stabilization of $\beta(\text{gin}_{}(I^k))$}
\label{sec:proof}

The goal of this section is to prove Theorem \ref{thm:mainthm} describing the stabilization of $\beta(\text{gin}(I^k))$.  Throughout, $I$ is a homogeneous ideal of $S=K[x_1, \dots, x_n]$ with the standard grading.  We denote the generic initial ideal of $I$ with respect to the reverse lexicographic order by $\text{gin}(I)$.

\subsection{Part (a) of Theorem \ref{thm:mainthm} follows from part (b)}

The following lemma claims that for all $k>>0$ the number of nonzero entries in $\beta(\text{gin}(I^k))$ is bounded.  Note that it only holds for the reverse lexicographic order (see Example \ref{example:ginstabilization}).
\begin{lem}\label{lem:ginwindow}
Suppose that $I$ is an ideal equigenerated in degree $r$, and let $c$ and $k_0$ be the constants such that $\text{reg}(I^k) = rk+c$ for $k>k_0$, guaranteed by Theorem \ref{thm:stabilization} .  Then for all $k>k_0$, the nonzero entries of $\beta(\text{gin}(I^k))$ are confined to the following  window.
$$\bordermatrix{ ~ & 0 & 1 & \cdots & n-1 \cr
					rk & \ast & \ast & \cdots & \ast \cr
					rk+1 &  \ast & \ast & \cdots & \ast \cr
					\vdots & \vdots & \vdots & \vdots & \vdots \cr
					rk+c &  \ast & \ast & \cdots & \ast \cr
	}$$
\end{lem}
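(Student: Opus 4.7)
The plan is to read both constraints straight off the Eliahou--Kervaire formula of Corollary \ref{cor:EKdegs} applied to $\text{gin}(I^k)$, which is Borel-fixed by Galligo's theorem. Writing $m_1,\dots,m_\ell$ for the minimal monomial generators of $\text{gin}(I^k)$, that formula reads
\[
\beta_{p,\,p+q}\bigl(\text{gin}(I^k)\bigr) \;=\; \sum_{\deg(m_i)=q}\binom{\text{max}(m_i)-1}{p},
\]
so the entire task reduces to bounding the integers $\text{max}(m_i)$ and $\deg(m_i)$.

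The column bound is immediate: every monomial of $S = K[x_1,\dots,x_n]$ satisfies $\text{max}(m)\leq n$, so $\binom{\text{max}(m_i)-1}{p}=0$ whenever $p\geq n$. Hence all nonzero entries of $\beta(\text{gin}(I^k))$ lie in the columns $0,1,\dots,n-1$.

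For the row bound, I would sandwich the degrees of the minimal generators of $\text{gin}(I^k)$ between $rk$ and $rk+c$. On the upper end, Corollary \ref{cor:regpreserved} gives $\text{reg}(\text{gin}(I^k))=\text{reg}(I^k)=rk+c$ for $k>k_0$, and the maximal degree of a minimal generator is always at most the regularity, so $\deg(m_i)\leq rk+c$. On the lower end, $I^k$ is equigenerated in degree $rk$, hence $(I^k)_j=0$ for $j<rk$; since $\text{gin}(I^k)$ has the same Hilbert function as $I^k$, the same vanishing holds for $\text{gin}(I^k)$, forcing $\deg(m_i)\geq rk$ for every $i$. Plugging $rk \leq \deg(m_i) \leq rk+c$ back into the Eliahou--Kervaire formula shows $\beta_{p,\,p+q}(\text{gin}(I^k))=0$ for $q<rk$ or $q>rk+c$, confining all nonzero entries to rows $rk,\,rk+1,\,\dots,\,rk+c$.

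There is essentially no obstacle here; the lemma is a direct assembly of previously recorded results (Eliahou--Kervaire, regularity preservation under reverse-lex gins, and Hilbert-function invariance under initial ideals). The only point requiring care is to invoke specifically the reverse lexicographic version of regularity preservation (Corollary \ref{cor:regpreserved}), since the lexicographic case fails even the qualitative row bound, as Example \ref{example:ginstabilization} illustrates.
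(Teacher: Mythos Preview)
Your proof is correct and follows essentially the same route as the paper's: the lower row bound comes from the minimal degree of elements in $\text{gin}(I^k)$ (via Hilbert-function preservation), the upper row bound comes from Corollary~\ref{cor:regpreserved}, and the column bound is the projective-dimension restriction. The only cosmetic difference is that you filter everything through the Eliahou--Kervaire formula explicitly, whereas the paper appeals directly to the definitions of regularity and projective dimension; the content is the same.
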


\begin{proof}
Fix $k>k_0$.  Since all elements of $\text{gin}(I^k)$ are of degree at least $rk$, the least row of $\beta(\text{gin}(I^k))$  containing nonzero entries is the $rk$th row.  By Corollary \ref{cor:regpreserved}, $\text{reg}(I^k) = \text{reg}(\text{gin}(I^k)) = rk+c$ so the $(rk+c)$th row of $\beta(\text{gin}(I^k))$ is the greatest containing nonzero entries.  Finally, since $\text{projdim}(\text{gin}(I^k)) \leq n-1$, the $(n-1)$st column is the greatest column containing nonzero entries.
\end{proof}

Since Lemma \ref{lem:ginwindow} says that only a finite number of $\beta_{i,i+rk+j}(\text{gin}(I^k))$ may be nonzero, part (a) of Theorem \ref{thm:mainthm} follows from part (b) of the theorem and Lemma \ref{lem:2conditions}.  Therefore, we will focus on the proof of part (b).  

\subsection{Proof of Part (b) of Theorem \ref{thm:mainthm}}

\begin{proof}
We will proceed by induction on the number of variables $n$ in $S$.  

\textbf{Base Case: Result holds for 2 variables.}

Suppose that $I \subseteq K[x, y]$ is equigenerated in degree $r$.  Then by Theorem \ref{thm:polystabilization},  $\beta_{i, rk+i+j}(I^k)$ is given by a polynomial $P_{i,j}(k)$ for all $k>>0$.  By consecutive cancellation (Theorem \ref{thm:consecutivecancellation}), there exist integers $c_{i,j}(k)$ such that $\beta_{i,rk+i+j}(\text{gin}_{}(I^k)) =  P_{i,i+j}(k) + c_{i,i+j}(k)$. 

Note that $\text{projdim}(\text{gin}_{}(I^k))$ and  $\text{projdim}(I^k)$ are at most 2 for all $k>>0$, so  $c_{i, i+j}(k) = P_{i,i+j}(k)=0$ whenever $i \geq 2$ .   

$$\bordermatrix{ \underline{\beta(\text{gin}_{}(I^k))} & 0 & 1  \cr
					rk & P_{0,0}(k)+ c_{0,0}(k) & P_{1,0}(k) + c_{1,0}(k)\cr
					rk+1 & P_{0,1}(k)+ c_{0,1}(k) & P_{1,1}(k) + c_{1,1}(k)\cr
					rk+2 & P_{0,2}(k)+ c_{0,2}(k) & P_{1,2}(k) + c_{1,2}(k)\cr
										~ & \vdots & \vdots \cr
	}$$
				
We will now proceed by induction on the row index $j$ to show that $c_{i,j}(k)$ is a polynomial in $k$ for all $k>>0$.  This will imply that the entries in $\beta(\text{gin}_{}(I^k))$ are given by polynomials for all $k>>0$.
					
According to Lemma \ref{lem:ginwindow}, $\beta_{i, rk}(\text{gin}_{}(I^k)) =0$ for all $i>0$.  By consecutive cancellation, Theorem \ref{thm:consecutivecancellation}, this implies that  $c_{0,0}(k) = 0$.  The Eliahou-Kervaire resolution implies that, for all $k>>0$, $P_{0,0}(k)$ gives the number of minimal generators of $\text{gin}_{}(I^k)$ of degree $rk$, while $P_{1,0}(k) + c_{1,0}(k)$ gives the number of minimal generators of $\text{gin}_{}(I^k)$ of degree $rk$ divisible by $y$.  We know that $x^{rk}$ the only  generator of $\text{gin}_{}(I^k)$ of degree $rk$ not divisible by $y$, so $P_{1,0}(k) + c_{1,0}(k) = P_{0,0}(k) - 1$.  Therefore $c_{1,0}(k) = P_{0,0}(k) -1-P_{1,0}(k)$ is a polynomial in $k$ for $k>>0$.  

Suppose that we know $c_{i,j}(k)$ is a polynomial in $k$ for  $i = 0, 1$ and all  $j < J$.  First notice that both  $P_{0,J}(k) + c_{0,J}(k)$ and  $P_{1,J}(k) + c_{1,J}(k)$ represent the number of generators of $\text{gin}_{}(I^k)$ of degree $rk+J$ since $y$ divides all generators of degree greater than $rk$.  Thus, $P_{0,J}(k) + c_{0,J}(k) = P_{1,J}(k) + c_{1,J}(k)$ and $c_{1,J}(k)$ is a polynomial for all $k>>0$ if $c_{0,J}(k)$ is.  However, by consecutive cancellation, $c_{0,J}(k) = c_{1, J-1}(k)$ is polynomial by our inductive assumption.  
Therefore, Theorem \ref{thm:mainthm} holds when $I$ is an equigenerated ideal in $K[x,y]$.

\textbf{Inductive Step:  Hyperplane Restriction}

Now suppose that Theorem \ref{thm:mainthm} holds for  all equigenerated ideals in $ K[x_1, \dots, x_{n-1}]$.  We will show that this implies the theorem  for equigenerated ideals in $K[x_1, \dots, x_n]$. The key element of this argument is hyperplane restriction, Theorem \ref{thm:hyperplanerestriction}.  Let $I$ be an equigenerated ideal in $K[x_1, \dots, x_n]$ and let $I_h$ be a generic hyperplane section of $I$.    $I_h$ is a homogeneous ideal of $K[x_1, \dots, x_{n-1}]$ equigenerated in degree $r$ and 
$$\text{gin}_{}((I_h)^k) = \text{gin}_{}((I^k)_h) = \text{gin}_{}(I^k)|_{x_n \rightarrow 0}.$$

Since our inductive assumption applies to $\{ I_h \} \subseteq K[x_1, \dots, x_{n-1}]$, there are polynomials $f_{i,j}(k)$ such that $\beta_{i, rk+i+j}(\text{gin}_{}((I_h)^k)) =\beta_{i, rk+i+j}(\text{gin}_{}((I^k)_h))  = f_{i,j}(k)$ for $i$, $j$, and $k>>0$.  From  Theorem \ref{thm:polystabilization}, we know that there are polynomials $P_{i,j}(k)$ such that $\beta_{i,rk+i+j}(I^k) = P_{i,j}(k)$ for all $k>>0$ and by consecutive cancellation $\beta_{i,rk+i+j}(\text{gin}_{}(I^k)) = P_{i,j}(k) + c_{i,j}(k)$ for some numbers $c_{i,j}(k)$.  To complete the inductive step it is sufficient to show that the $c_{i,j}(k)$ are polynomials in $k$ for all $k>>0$.  

$$\bordermatrix{ \underline{\beta(\text{gin}_{}(I^k)_h):} & 0 & 1  & \cdots & n-2 \cr
					rk & f_{0,0}(k)& f_{1,0}(k) & \cdots & f_{n-2, 0}(k) \cr
					rk+1 & f_{0,1}(k) & f_{1,1}(k) &  \cdots & f_{n-2, 1}(k)  \cr
					\vdots & \vdots & \vdots & \cdots & \vdots \cr
					rk+\rho' & f_{0,\rho'}(k)& f_{1,\rho'}(k)& \cdots & f_{n-2, \rho'}(k)  \cr
	}$$

$$\bordermatrix{ \underline{\beta(\text{gin}_{}(I^k)):} & 0 & 1  & \cdots & n-1 \cr
					rk & P_{0,0}(k)+ c_{0,0}(k) & P_{1,0}(k) + c_{1,0}(k) & \cdots & P_{n-1, 0}(k) + c_{n-1,0}(k)\cr
					rk+1 & P_{0,1}(k)+ c_{0,1}(k) & P_{1,1}(k) + c_{1,1}(k) &  \cdots & P_{n-1, 1}(k) + c_{n-1,1}(k) \cr
					\vdots & \vdots & \vdots & \cdots & \vdots \cr
					rk+\rho & P_{0,\rho}(k)+ c_{0,\rho}(k) & P_{1,\rho}(k) + c_{1,\rho}(k) & \cdots & P_{n-1, \rho}(k) + c_{n-1, \rho}(k) \cr
	}$$

%For a monomial ideal $B$, let $G_{\delta}(B)$ denote the set of minimal generators of $B$ of degree $\delta$.  If $g$ is a monomial and $\text{max}(g)$ denotes the index of the largest index-variable dividing $g$, we define $m_{i, \delta}(B)$ to be the number of monomials $g \in G_{\delta}(B)$ such that $\text{max}(g) = i$.  

By Theorem \ref{thm:hyperplanerestriction}, for any  $\delta$ the set of degree $\delta$ minimal generators of $\text{gin}_{}(I^k)$ not divisible by $x_n$ is the same as the set of degree $\delta$ minimal generators of  $\text{gin}_{}((I^k)_h)$.  We will use this fact along with the Eliahou-Kervaire resolution to determine how the $f_{i,j}$s and $P_{i,j}$s are related, row-by-row.

For a monomial ideal $M$, let $m_{i, \delta}(M)$ denote the number of minimal generators of $M$ of degree $\delta$ divisible by $x_i$ and no larger-indexed variable.  $\beta_{0, rk}(M)$ gives the number of generators of $M$ of degree $rk$. Since $\beta_{i, rk}(\text{gin}_{}(I^k)) = 0$ for all $i>0$, $c_{0,0}(k) = 0$ by consecutive cancellation.  Then 
\begin{eqnarray*}
m_{n, rk}(\text{gin}_ {}(I^k)) &=& \beta_{0, rk}(\text{gin}_{}(I^k)) - \beta_{0, rk}(\text{gin}_{}((I^k)_h))\\
&=& P_{0,0}(k) - f_{0,0}(k).
\end{eqnarray*}
We know that $P_{0,0}(k)$ and $f_{0,0}(k)$ are eventually polynomial in $k$, so $m_{n, rk}(\text{gin}_{}(I^k))$ is a polynomial in $k$ for all $k>>0$.

Similarly, in homological degree $I$, Corollary \ref{cor:EKdegs} implies that
\begin{eqnarray*}
 [P_{I,0}(k) +c_{I,0}(k)] - f_{I,0}(k) &=&  \sum_{q=I+1}^{n} {q-1 \choose I} m_{q, rk}(\text{gin}_{}(I^k)) - \sum_{q=I+1}^{n-1} {q-1 \choose I} m_{q, rk}(\text{gin}_{}((I^k)_h))\\
 &=& {n-1 \choose I} m_{n, rk}(\text{gin}_{}(I^k))
 \end{eqnarray*}
 Since we know that the $P_{I,0}(k)$, $f_{I,0}(k)$, and $ m_{n, rk}(\text{gin}_{}(I^k))$ are polynomial for all $k>>0$, $c_{I, 0}(k)$ must be polynomial for $k>>0$ as well.  Therefore, we know that the cancellation numbers $c_{i, 0}(k)$ in the first nonzero row of $\beta(\text{gin}_{}(I^k))$ are polynomial for all $k>>0$.  
 
 Now assume that we know that the cancellation numbers $c_{i, j}(k)$ are polynomial for all $k>>0$ whenever $j < J$.  By consecutive cancellation, 
 $$c_{0,J}(k) =c_{1, J-1}(k) - c_{2, J-2}(k) + \cdots + (-1)^n c_{n-1, J-(n-1)}(k).$$
 By our assumption, this implies that $c_{0, J}(k)$ is also polynomial for all $k>>0$.  
 As before, 
 $$[P_{0, J}(k) + c_{0, J}(k)] - f_{0, J}(k) = m_{n, rk+J}(\text{gin}(I^k))$$
 so $m_{n, rk+J}(\text{gin}_{}(I^k))$ is polynomial for all $k>>0$ as well.  
 
 In column $I$ and row $rk+J$, 
\begin{eqnarray*}
 [P_{I, J}(k) +c_{I,J}(k)] - f_{I,J}(k) &=&  \sum_{q=I+1}^{n} {q-1 \choose I} m_{q, rk+J}(\text{gin}_{}(I^k)) - \sum_{q=I+1}^{n-1} {q-1 \choose I} m_{q, rk+J}(\text{gin}_{}((I^k)_h))\\
 &=& {n-1 \choose I} m_{n, rk+J}(\text{gin}_{}(I^k))
 \end{eqnarray*}
 Since $m_{n, rk+J}(\text{gin}_{}(I^k))$ is polynomial for all $k>>0$, we see that $c_{I,J}(k)$ must also be polynomial in $k$ for all $k>>0$. Therefore, all cancellation numbers in row $rk+J$ are polynomial for $k>>0$. We  conclude by induction on the row index that that all entries in the table $\beta(\text{gin}_{}(I^k))$ are given by polynomials in $k$ for $k>>0$ when $I \subseteq K[x_1, \dots, x_n]$ is equigenerated.  
 
 Therefore, by induction on the number of variables, we conclude that the theorem holds for all equigenerated ideals $I$ in any number of variables.

\end{proof}

\subsection{Consequences of the Proof}
\label{subsec:consequences}

By Theorem \ref{thm:polystabilization}, the polynomials $P_{i,j}(k)$ that give the entries of $\beta(I^k)$ are of degree less than $\ell(I)$, the analytic spread of $I$.  If we follow the degrees of the polynomials in the above proof, we see that the same can be said for the functions giving the cancellation numbers.  
\begin{cor}
Let $I$ be an ideal equigenerated in degree $r$.  Then the degree of each polynomial $Q_{i,j}(k) = \beta_{i,i+j+rk}(\text{gin}_{}(I^k))$ guaranteed by Theorem \ref{thm:mainthm} is less than $\ell(I)$. 
\end{cor}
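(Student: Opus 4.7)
The plan is to refine the inductive argument used in the proof of Theorem \ref{thm:mainthm}(b), carrying a degree bound through every step. Recall that $Q_{i,j}(k) = P_{i,j}(k) + c_{i,j}(k)$, where $P_{i,j}(k) = \beta_{i,\,rk+i+j}(I^k)$ and $c_{i,j}(k)$ is the cancellation polynomial produced by Theorem \ref{thm:consecutivecancellation}. By Theorem \ref{thm:polystabilization} each nonzero $P_{i,j}(k)$ has degree strictly less than $\ell(I)$, so it suffices to prove the same bound for each $c_{i,j}(k)$.

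For the base case $n=2$, every relation appearing in that proof --- namely $c_{1,0}(k) = P_{0,0}(k) - 1 - P_{1,0}(k)$, $c_{0,J}(k) = c_{1,J-1}(k)$, and $c_{1,J}(k) = P_{0,J}(k) + c_{0,J}(k) - P_{1,J}(k)$ --- expresses a cancellation polynomial as an integer-coefficient combination of polynomials already of degree less than $\ell(I)$. Hence the induction on $J$ that produced polynomiality simultaneously yields $\deg c_{i,j} < \ell(I)$.

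For the inductive step on $n$, I would apply the corollary's inductive hypothesis to the generic hyperplane section $I_h \subseteq K[x_1, \ldots, x_{n-1}]$, using the standard inequality $\ell(I_h) \leq \ell(I)$ for generic hyperplane sections (the fibre cone of $I_h$ is obtained from that of $I$ by modding out the image of the generic linear form, so its Krull dimension drops by at most one). This gives $\deg f_{i,j} < \ell(I_h) \leq \ell(I)$ for every nonzero Betti polynomial $f_{i,j}$ of $\text{gin}((I_h)^k)$. Following the proof of Theorem \ref{thm:mainthm}(b) row by row, the three key relations
\begin{align*}
m_{n,\,rk+J}(\text{gin}(I^k)) &= P_{0,J}(k) + c_{0,J}(k) - f_{0,J}(k), \\
c_{I,J}(k) &= f_{I,J}(k) + \binom{n-1}{I}\, m_{n,\,rk+J}(\text{gin}(I^k)) - P_{I,J}(k), \\
c_{0,J}(k) &= \sum_{s=1}^{n-1} (-1)^{s+1}\, c_{s,\,J-s}(k)
\end{align*}
successively present each $c_{i,j}(k)$ and each $m_{n,\,rk+J}(\text{gin}(I^k))$ as an integer linear combination of polynomials of degree $<\ell(I)$, so a straightforward induction on $J$ finishes the argument.

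The main obstacle I anticipate is the single geometric input $\ell(I_h) \leq \ell(I)$: once that is cited, the remainder is pure bookkeeping, since integer combinations of polynomials of degree $<\ell(I)$ retain that degree bound. Everything else, including the base row $J=0$ and the propagation to higher rows, follows the skeleton already built in the proof of Theorem \ref{thm:mainthm}(b).
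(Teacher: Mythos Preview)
Your proposal is correct and follows exactly the route the paper indicates: track the degree bound through the recursive relations in the proof of Theorem~\ref{thm:mainthm}(b), using that integer linear combinations of polynomials of degree $<\ell(I)$ remain of degree $<\ell(I)$. The paper's own argument is the single sentence ``follow the degrees of the polynomials in the above proof,'' so you have in fact supplied the details the paper omits; in particular your identification of the one extra geometric input, $\ell(I_h)\le\ell(I)$ for a generic hyperplane section, is a point the paper leaves entirely implicit but which is indeed needed to push the bound $\deg f_{i,j}<\ell(I_h)$ coming from the inductive hypothesis on $I_h$ up to the desired bound $<\ell(I)$.
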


Similarly, we can follow the stabilization indices in the proof to obtain the following result.

\begin{cor}
Let $I$ be an equigenerated ideal with  generators of degree $r$.  Then the stabilization index of $\{ \text{gin}(I^k) \}$ is at most the stabilization index of $\{ I^k \}$.  
\end{cor}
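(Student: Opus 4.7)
Denote by $k_0$ the stabilization index of $\{I^k\}$. My plan is to trace through the proof of Theorem \ref{thm:mainthm} and verify that every polynomial identity used to express $\beta_{i,rk+i+j}(\text{gin}(I^k))$ becomes valid no later than $k_0$; together with Lemma \ref{lem:ginwindow}, this forces the shape of $\beta(\text{gin}(I^k))$ to stabilize at an index at most $k_0$ via Lemma \ref{lem:2conditions}.

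First I would verify that Lemma \ref{lem:ginwindow} applies starting at $k_0$: shape stabilization of $\{I^k\}$ at $k_0$ implies $\text{reg}(I^k) = rk + c$ for $k \geq k_0$, and by Corollary \ref{cor:regpreserved} the same equality holds for $\text{gin}(I^k)$, pinning the nonzero entries of $\beta(\text{gin}(I^k))$ inside a fixed $(c+1)\times n$ window for all $k \geq k_0$. Then I would re-run the double induction from the proof of Theorem \ref{thm:mainthm}, on the number of variables $n$ and on the row index $J$, but this time tracking the stabilization level rather than only the eventual existence of the polynomial description. The cancellation numbers $c_{i,J}(k)$ there are assembled from (i) the polynomial values $P_{i,j}(k)$ giving $\beta(I^k)$, assumed valid from $k_0$; (ii) the polynomial values for $\beta(\text{gin}((I_h)^k))$, valid from $k_0$ by the inductive hypothesis applied to $I_h$; and (iii) consecutive-cancellation and Eliahou--Kervaire identities from Theorem \ref{thm:consecutivecancellation} and Corollary \ref{cor:EKdegs}, which are non-asymptotic. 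The resulting polynomial expressions for each entry of $\beta(\text{gin}(I^k))$ then hold from $k_0$, so Lemma \ref{lem:2conditions} delivers the bound.

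The main obstacle is the implicit claim used in the inductive step: that the stabilization index of the generic hyperplane section $\{(I_h)^k\}$ is itself bounded by $k_0$. Showing this would require a separate argument, most naturally by comparing Betti diagrams of $I^k$ and $(I_h)^k$ via the generic hyperplane short exact sequence in characteristic zero. A secondary subtlety is that Definition \ref{defn:stabilize} records only the zero/nonzero pattern, whereas the proof of Theorem \ref{thm:mainthm} uses strict polynomial equalities for $\beta(I^k)$; to apply that proof starting at $k_0$ one needs the polynomial identities for $\beta(I^k)$ themselves to hold from $k_0$, which ultimately draws on the constructions in \cite{LV04} and \cite{Singla07}.
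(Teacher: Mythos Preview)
Your approach is exactly the paper's: the paper proves this corollary in a single sentence, ``Similarly, we can follow the stabilization indices in the proof to obtain the following result,'' which is precisely the trace-through you outline. The two obstacles you flag---that the stabilization index of $\{(I_h)^k\}$ must be bounded by that of $\{I^k\}$, and that shape stabilization of $\{I^k\}$ at $k_0$ must force the polynomial identities $\beta_{i,rk+i+j}(I^k)=P_{i,j}(k)$ to hold from $k_0$---are genuine and are not addressed in the paper either; the paper simply asserts the corollary as an observation from the proof without filling in these points.
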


We may also consider the structure of the Boij-S\"{o}derberg decompositions of $\beta(\text{gin}(I^k))$. The proof of the main result from \cite{MT15} holds for any family of ideals $\{J_k\}$ such that the shape of $\beta(J_k)$ stabilizes and whose entries are given by polynomials in $k$.  Therefore, Theorem \ref{thm:mainthm} gives the following result.  

\begin{cor}\label{cor:bsdecompos}
Let $I$ be an equigenerated ideal with  generators of degree $r$.  Then there are integers $m$ and $K$ such that for all $k> K$, the positive Boij-S\"{o}derberg decomposition of $\beta(\text{gin}(I^k))$ is of the form
$$\beta(\text{gin}(I^k)) = w_1(k)\pi_1(k)+\cdots + w_m(k)\pi_m(k)$$
where each $w_i(k)$ is a polynomial in $k$ with rational coefficients and $\pi_1(k) < \pi_2(k) < \cdots < \pi_m(k)$ is a translated family of chains.  
\end{cor}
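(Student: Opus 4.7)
The plan is to apply the main theorem of \cite{MT15} directly to the family $\{\text{gin}(I^k)\}$. As noted in the paragraph preceding the corollary, that result takes as input any graded family of ideals $\{J_k\}$ satisfying two hypotheses: (i) the shape of $\beta(J_k)$ stabilizes in the sense of Definition \ref{defn:stabilize}, and (ii) the nonzero entries of $\beta(J_k)$ are given by polynomials in $k$ for $k \gg 0$ in the sense of Definition \ref{defn:givenbypolys}. Given these hypotheses, \cite{MT15} produces the positive Boij-S\"{o}derberg decomposition in precisely the form stated, with polynomial coefficients $w_i(k)$ and a translated family of chains $\pi_1(k) < \cdots < \pi_m(k)$.

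First I would verify that both hypotheses hold in our setting. For the family $\{\text{gin}(I^k)\}$, hypothesis (i) is exactly Theorem \ref{thm:mainthm}(a), and hypothesis (ii) is exactly Theorem \ref{thm:mainthm}(b). In particular, we may take $K$ to be the larger of the stabilization index guaranteed by (a) and the maximum of the constants $k_0(i,j)$ appearing in (b), which is finite because the window of Lemma \ref{lem:ginwindow} contains only finitely many positions. Having checked the hypotheses, the corollary follows immediately from the cited result.

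Since the work of extracting a translated family of chains and polynomial weights from a family of Betti diagrams with stabilized shape and polynomial entries is carried out in \cite{MT15}, there is no new combinatorial obstacle here; the only substantive step is the identification of the two hypotheses. If anything, the one subtlety worth flagging is that the integer $m$ bounding the number of chains in the decomposition depends only on the stabilized shape, and is therefore independent of $k$ for $k > K$. This is what permits the same indexing $\pi_1(k), \dots, \pi_m(k)$ to be used uniformly across all large $k$.
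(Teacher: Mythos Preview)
Your proposal is correct and matches the paper's own argument: the corollary is deduced directly from the main result of \cite{MT15} once Theorem~\ref{thm:mainthm} supplies the two hypotheses (stabilization of shape and polynomial entries). The only difference is that you spell out the choice of $K$ and the finiteness of $m$ explicitly, which the paper leaves implicit.
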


\bibliography{BSBib}
\bibliographystyle{amsalpha}
\nocite{*}

\end{document}